\let\le\leqslant
\let\ge\geqslant
\let\leq\leqslant
\let\geq\geqslant
\newtheorem{theorem}{Theorem}
\newtheorem{lemma}{Lemma}
\newtheorem*{remark*}{Remark}
\theoremstyle{definition}
\newtheorem{definition}{Definition}
\newtheorem*{hypothesis*}{Hypothesis}
\begin{document}
\thispagestyle{empty}
\begin{center}
\textbf{\large Maximum Spread of Vertex Degrees in a Simple Graph}\\
\vfil
Sergey Dmitrievich Onishchenko\footnote{Saint Petersburg University, 7/9 Universitetskaya nab., St. Petersburg, 199034 Russia. E-mail: dimer03@mail.ru}\\
\vfil
August 16, 2025
\vfil \vfil
\end{center}

%%==================================================================
\begin{comment}
\vspace{2cm}
\tableofcontents
\newpage
\end{comment}

%%==================================================================
\section*{Abstract}
We consider the following problem: let $n>k$ be natural numbers, and let $G$ be a graph on $n$ vertices (undirected, without loops or multiple edges).
Denote by $h_k(G)$ the number of unordered pairs of vertices in the graph $G$ whose degrees differ by less than $k$. We aim to determine the smallest possible value $f(n,k)$ of the quantity $h_k(G)$.
Interest in this question is motivated by the fact that the bipartite analogue of the problem enabled S. Cichomski and F. Petrov \cite{CP} to prove the Burdzy–Pitman conjecture on the spread of independent coherent random variables.

The problem has been solved under a number of restrictions on $n$ and $k$.
A conjecture about the answer in the general case is also presented.

\section*{Acknowledgement}
The work of S. Onishchenko was performed at the Saint Petersburg Leonhard Euler International Mathematical Institute and supported by the Ministry of Science and Higher Education of the Russian Federation (agreement no. 075–15–2025–343).

\section*{Introduction}
We consider the following problem: let $n>k$ be natural numbers, and let $G$ be a graph on $n$ vertices (undirected, without loops or multiple edges).
Denote by $h_k(G)$ the number of unordered pairs of vertices in the graph $G$ whose degrees differ by less than $k$. We aim to determine the smallest possible value $f(n,k)$ of the quantity $h_k(G)$.
Interest in this question is motivated by the fact that the bipartite analogue of the problem enabled S. Cichomski and F. Petrov \cite{CP} to prove the Burdzy–Pitman conjecture on the spread of independent coherent random variables.

The problem has been solved under a number of restrictions on $n$ and $k$.
A conjecture about the answer in the general case is also presented. If a counterexample to this conjecture exists for some $n$ and $k$, it can be proven that for these $n$ and $k$ there also exists a counterexample of the following form:

The vertices are partitioned into sets $B_1, B_2, B_3, B_4, B_5$. $B_1, B_2$ are cliques; $B_4, B_5$
are anticliques (independent sets); the bipartite graphs
induced on the pairs of sets $(B_1,B_2)$, $(B_1,B_3)$, $(B_1,B_4)$, $(B_2,B_3)$ are
complete; on the pairs of sets $(B_3,B_5)$, $(B_2,B_5)$, $(B_3,B_4)$ they are empty.

Unfortunately, this statement did not allow us to extend the set of pairs $n$ and $k$ for which the problem is solved, so its proof will not be provided here.

We will need a useful lemma from \cite{E}:

\begin{lemma}
In any graph $G$ on $n>k$ vertices, one can find $k+1$ vertices whose degrees
differ pairwise by less than $k$.
\end{lemma}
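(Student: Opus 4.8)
The plan is to argue by contradiction after sorting the vertices by degree. Write the degrees as $d_1\le d_2\le\cdots\le d_n$. Since among any $k+1$ vertices the ones realizing the smallest spread are consecutive in this order, it suffices to find an index $i$ with $d_{i+k}-d_i\le k-1$; so I assume for contradiction that $d_{i+k}\ge d_i+k$ for every $i\in\{1,\dots,n-k\}$. The one structural fact I will lean on is that a simple graph cannot have both an isolated vertex and a vertex adjacent to all others, so the degrees omit at least one of the values $0$ and $n-1$; passing to the complement (which preserves all degree differences) I may assume $d_n\le n-2$.

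The engine is telescoping along arithmetic chains of step $k$. Setting $m=\lfloor(n-1)/k\rfloor$ and summing the assumed inequalities along $1,1+k,\dots,1+mk$ gives $d_{1+mk}\ge d_1+mk$. First I would dispose of the clean case $k\mid(n-1)$: here $1+mk=n$, so $d_n\ge d_1+(n-1)$, forcing $d_1=0$; but an isolated vertex precludes a universal one, whence $d_n\le n-2<n-1\le d_n$, a contradiction. The same reduction can be run as an induction on $n$: if $G$ has an isolated or a universal vertex, delete it (the degrees of the remaining vertices either all stay fixed or all drop by one, so their differences are unchanged) and apply the statement to the smaller graph; the base case $n=k+1$ is immediate, since then the spread of all $n$ vertices is at most $n-2=k-1$.

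This leaves the genuine case, where the slack $s:=(n-1)-mk\in\{1,\dots,k-1\}$ is positive and $G$ has neither an isolated nor a universal vertex. The telescoped inequalities now pin down the extremes: the bottom chain gives $d_{n-s}\ge mk=n-1-s$, so the top $s+1$ vertices have degree $\ge n-1-s$, while the symmetric chain ending at $n$ gives $d_{s+1}\le d_n-mk\le s-1$, so the bottom $s+1$ vertices have degree $\le s-1$. More generally the whole family of overlapping inequalities forces the profile $d_p\ge k\lfloor(p-1)/k\rfloor$ from below and $d_p\le (n-1)-k\lfloor(n-p)/k\rfloor$ from above, squeezing the degree sequence into its maximally spread-out shape.

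The crux — and the main obstacle — is to show that such a spread-out sequence cannot be graphical; note that this really does require graphicality, since e.g.\ the sequence $0,0,2,2$ satisfies all the inequalities yet is ruled out only because no graph on four vertices realizes it. The natural tool is the Erd\H{o}s--Gallai-type necessary condition $\sum_{v\in H}\deg v-\sum_{v\notin H}\deg v\le |H|(|H|-1)$, applied to the block $H$ of highest degrees: the high-degree vertices simply demand more incidences than the low-degree vertices can supply, exactly as two vertices of degree $n-1$ cannot coexist with two isolated vertices. The difficulty is that for $s\ge 1$ the guaranteed profile bounds make this inequality hold only with near-equality at the natural choice $|H|=s+1$, so the contradiction is razor-thin; closing it will require a sharper choice of block size, or the parity of $\sum_v\deg v$, or the observation that the lower and upper profile bounds cannot be simultaneously tight. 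I expect essentially all of the real work to sit in this final tie-breaking step.
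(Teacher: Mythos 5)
Your proposal is incomplete, and by your own admission the missing step is the entire content of the lemma. The preliminary reductions are correct (sorting, complementation to force $d_n\le n-2$, the case $k\mid(n-1)$, the base case $n=k+1$), and the case $s=1$ also closes easily from your block bounds (two isolated vertices coexisting with a vertex of degree $\ge n-2$). But for $s\ge 2$ you only list candidate tools and defer ``essentially all of the real work'' to a tie-breaking step you do not carry out. Worse, the specific strategy you outline --- extract the pointwise profile bounds $d_p\ge k\lfloor(p-1)/k\rfloor$ and $d_p\le (n-2)-k\lfloor(n-p)/k\rfloor$, then prove that any sequence fitting this profile is non-graphical --- cannot be completed at all: for $n=9$, $k=3$ the sequence $(1,1,1,3,3,3,6,6,6)$ fits the profile, has even sum, and \emph{is} graphical (take disjoint triples $L,M,H$; make $H$ a triangle, join every vertex of $H$ to every vertex of $M$, and join $H$ to $L$ by a perfect matching). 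This graph is of course not a counterexample to the lemma --- it violates the coupled inequality $d_4\ge d_1+3$ --- and that is precisely the point: passing from the pairwise hypotheses $d_{i+k}\ge d_i+k$ to the pointwise profile discards essential information, and once it is discarded no Erd\H{o}s--Gallai or parity argument can restore a contradiction.

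The paper's proof shows what actually closes the gap: keep the coupled inequalities and double count. Each vertex $s+k$ ($1\le s\le n-k$) has at most $k$ neighbors among the $k$ vertices lying outside $\{1,\dots,n-k\}$, hence at least $d_{s+k}-k\ge d_s$ neighbors inside $\{1,\dots,n-k\}$; for $s=n-k$ one gets at least $d_n-(k-1)>d_{n-k}$ such neighbors, since vertex $n$ itself lies outside that set. Summing over $s$, the vertices $1,\dots,n-k$ receive strictly more than $d_1+\cdots+d_{n-k}$ edge-endpoints from the vertices $k+1,\dots,n$, yet vertex $i$ can absorb at most $d_i$ of them --- a contradiction, uniform in $n$ and $k$, with no case analysis, no complementation, and no graphicality criterion. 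If you wish to salvage your framework, the repair is to work directly with the inequalities $d_{s+k}\ge d_s+k$ and use a counting argument over such \emph{overlapping} blocks (both of size $n-k$), rather than a disjoint-block Erd\H{o}s--Gallai estimate, which, as your own slack computation suggests and the example above confirms, is too weak once $s\ge 2$.
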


For the sake of completeness, we provide (a simpler proof than in \cite{E})

\begin{proof} Label the vertices from 1 to $n$
in order of increasing degrees
$d_1\leq \ldots \leq d_n$. Assuming the contrary,
we get $d_{s+k}\geqslant d_s+k$ for all
$s=1,2,\ldots,n-k$. Let each of the vertices $k+1,\ldots,n$
send a postcard to all their neighbors. Then
for each $s=1,\ldots,n-k$
vertex number $s+k$ sends
at least $d_{s+k}-k\ge d_s$ postcards to vertices
with numbers from 1 to $n-k$, and for $s=n-k$ this inequality is strict. But vertices with numbers from 1 to
$n-k$ can receive at most $d_1+\ldots+d_{n-k}$
postcards — a contradiction.
\end{proof}

%%==================================================================
\newpage

%%==================================================================

%%------------------------------------------------------
\section{Formulation of the Main Results}

The following theorem provides an upper bound for the quantity $f(n,k)$.
\begin{theorem}\label{main}
There exists a graph
$G_{n,k}$ on $n$ vertices such that $h_k(G_{n,k})=f_0(n,k)$, where
$$
f_0(n,k):=
\left(\left\lceil \frac{n}{k} \right\rceil - 2\right)\binom{k}{2} + \binom{k+1}{2} +
\binom{n-k \left( \left\lceil\frac{n}{k} \right\rceil - 1 \right)-1
}{2}
$$
Therefore, $f(n,k)\leqslant f_0(n,k)$.
\end{theorem}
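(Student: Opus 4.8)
The plan is to prove the bound by exhibiting an explicit graph $G_{n,k}$ whose degree multiset splits into $q:=\lceil n/k\rceil$ clusters, arranged so that two vertices have degrees differing by less than $k$ exactly when they lie in the same cluster. Concretely, I would partition the $n$ vertices into groups $V_1,\dots,V_q$ (listed in order of increasing degree) with $|V_j|=k$ for all but two indices: one group of size $k+1$ and one of size $r:=n-k(q-1)-1$. Since $(q-2)k+(k+1)+r=n$, this is a genuine partition. The aim is to arrange degrees so that all vertices of a fixed $V_j$ have equal degree (or degrees spanning an interval of length $<k$), while the degrees of consecutive groups jump by at least $k$. Granting this, every within-group pair is counted by $h_k$ and no between-group pair is, so that $h_k(G_{n,k})=\sum_j\binom{|V_j|}{2}=(q-2)\binom{k}{2}+\binom{k+1}{2}+\binom{r}{2}=f_0(n,k)$, which is the assertion.

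For the construction itself I would use a blown-up half-graph (equivalently a threshold / split-type graph): declare $v\in V_i$ and $w\in V_j$ adjacent precisely when $i+j\ge q+1$, with each $V_i$ spanning a clique when $2i\ge q+1$ and an independent set otherwise. By symmetry all vertices of a fixed $V_i$ receive the same degree, and a direct count gives $\deg(V_i)=\sum_{j\ge q+1-i}|V_j|$ up to the within-group term; thus passing from $V_i$ to $V_{i+1}$ adjoins essentially one further group of size about $k$ to each neighbourhood. Hence consecutive group-degrees differ by roughly $k$, and—degrees being monotone in $i$—even non-consecutive groups differ by at least $k$. The remaining work is to choose the degree offsets and the two exceptional sizes so that every consecutive gap is at least $k$ \emph{exactly}, not merely approximately.

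This last point is where I expect the real difficulty, and it is also what forces the asymmetric sizes $k+1$ and $r$ rather than $q$ equal blocks. With all blocks of size $k$ and perfectly $k$-separated equal degrees one computes a naive value $q\binom{k}{2}$; the formula $f_0$ exceeds this by exactly $1$ when $k\mid n$ (indeed $r=k-1$ there and $\binom{k+1}{2}+\binom{k-1}{2}=2\binom{k}{2}+1$), which signals an unavoidable realizability obstruction: the perfectly separated all-equal sequence is not graphical (a parity or Erdős–Gallai failure), and the cheapest repair shifts a single vertex across a boundary, enlarging one block to $k+1$, shrinking another to $r$, and creating exactly one extra coincident pair. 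I would therefore (i) write down the exact degree values cluster by cluster, (ii) verify graphicality—either by exhibiting the adjacency above explicitly or by checking the Erdős–Gallai inequalities—and (iii) confirm that the two boundary clusters, where the clique/independent-set transition at $2i=q+1$ and the extremes $\deg\approx 0$, $\deg\approx n-1$ tighten the available room, are precisely the ones requiring the size adjustment.

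Finally I would carry out the elementary count $\sum_j\binom{|V_j|}{2}$ for the chosen sizes to recover $f_0(n,k)$ verbatim, completing the proof that $f(n,k)\le f_0(n,k)$.
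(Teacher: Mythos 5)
Your construction is the same one the paper uses --- a blow-up of the half-graph with group sizes $k,\dots,k,k+1,r$, so that the close pairs are exactly the within-group pairs and $\sum_j\binom{|V_j|}{2}=f_0(n,k)$. But the proposal stops precisely where the paper's proof actually lives: you never decide \emph{where} the two exceptional groups go, and in this construction their placement is forced, not a tunable ``offset''. Write $s_1,\dots,s_q$ for the group sizes in your indexing (adjacency between $V_i$ and $V_j$ iff $i+j\ge q+1$). A vertex of $V_j$ has degree $\sum_{m\ge q+1-j}s_m$, minus $1$ when $2j\ge q+1$ (no self-adjacency inside its own clique). Hence the consecutive degree gaps are $s_{q-j}$ for $j=1,\dots,q-1$, \emph{except} at the unique clique/independent-set transition $j^*$ (where $2j^*<q+1\le 2j^*+2$), where the gap is $s_{q-j^*}-1$. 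Two placements are therefore forced: (i) the group of size $r<k$ must be the unique group whose size never occurs as a gap --- $V_q$ in your convention (the group joined to everything), the isolated bottom group $V_0$ in the paper's; put it anywhere else and some gap equals $r<k$, creating on the order of $k\cdot r$ spurious close pairs across groups, so $h_k$ strictly exceeds $f_0$. (ii) The group of size $k+1$ must sit exactly at index $q-j^*$, so that the extra vertex cancels the $-1$ from self-non-adjacency; put it anywhere else and the gap at the transition is $k-1<k$, creating about $k^2$ spurious close pairs. This cancellation is exactly the paper's trick: it places the small group at the isolated end and the $(k+1)$-group at the middle index $\lfloor (i+1)/2\rfloor$ of its $0$-based indexing, after which every vertex of group $j$ has degree exactly $kj$, and the theorem follows by the count you already wrote down.

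Two smaller points. Once the placement is fixed there is nothing left to check about graphicality: the graph is given explicitly, so one simply computes its degrees --- no Erd\H{o}s--Gallai verification is needed, and planning for one suggests the construction was not yet pinned down. Also, your discussion of why $q\binom{k}{2}$ is unattainable when $k\mid n$ concerns the \emph{lower} bound (it is the content of the paper's Theorem 4, proved there by a counting argument rather than parity) and plays no role in Theorem 1, which only requires exhibiting the graph.
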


We conjecture that this bound is sharp, i.e., $f(n,k)=f_0(n,k)$
for all $n>k$:
\begin{hypothesis*}
$h_k(G)\geqslant f_0(n,k)$ for any graph
$G$ on $n$ vertices.
\end{hypothesis*}

\begin{remark*}
For $k<n < 17$, the conjecture holds, as verified by computer enumeration.
\end{remark*}

The following theorems are special cases of the conjecture that have been proven.

\begin{theorem}\label{dva}
The conjecture holds for $k \leq 2$.
\end{theorem}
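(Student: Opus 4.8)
My plan is to treat $k=1$ and $k=2$ separately, in both cases working with the degree histogram: write $a_i$ for the number of vertices of degree $i$ ($0\le i\le n-1$), so that $\sum_i a_i=n$ and
\[
h_k(G)=\sum_i\binom{a_i}{2}+\sum_{j=1}^{k-1}\sum_i a_i a_{i+j}.
\]
For $k=1$ this reads $h_1(G)=\sum_i\binom{a_i}{2}$, and since $f_0(n,1)=1$ the assertion is the classical fact that two vertices must share a degree: the degrees lie in $\{0,\dots,n-1\}$, but a vertex of degree $n-1$ and a vertex of degree $0$ cannot coexist, so at most $n-1$ distinct values occur among $n$ vertices and some $a_i\ge 2$, giving $h_1(G)\ge 1=f_0(n,1)$.

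The substance is $k=2$, where one computes $f_0(n,2)=\lceil n/2\rceil+1$ and the histogram formula becomes $h_2(G)=\sum_i\binom{a_i}{2}+\sum_i a_i a_{i+1}$. Let $m$ be the number of occupied degree values and $P$ the number of pairs of consecutive integers both occurring as degrees. Using $\binom{a}{2}\ge a-1$ for $a\ge 1$ together with $a_i a_{i+1}\ge 1$ on occupied adjacent pairs, I would first obtain
\[
h_2(G)\ge (n-m)+P.
\]
The key graphical input is $\Delta-\delta\le n-2$ (an isolated and a universal vertex cannot coexist); since the $m$ occupied values span a range $\Delta-\delta\ge 2(m-1)-P$, this yields $P\ge 2m-n$ and hence $h_2(G)\ge (n-m)+\max(0,2m-n)=\max(n-m,m)\ge\lceil n/2\rceil$.

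It remains to upgrade $\lceil n/2\rceil$ to $\lceil n/2\rceil+1$, and this is the step I expect to be the main obstacle, since it must exploit graphicality beyond the crude range bound. I would argue by analysing the equality case: if $h_2(G)=\lceil n/2\rceil$ then every inequality above is forced to be tight, which pins down $a_i\in\{0,1,2\}$ for all $i$, forces the occupied values to form an arithmetic progression of common difference $2$, and (for $n$ even) forces every multiplicity to equal $2$, so that the degree multiset is $\{c,c,c+2,c+2,\dots,c+n-2,c+n-2\}$ with range $n-2$ and $c\in\{0,1\}$; for $n$ odd the same tightening admits no integer solution. Finally I refute the even configuration with the sharper observation that two isolated vertices force $\Delta\le n-3$ and, dually via the complement (which preserves $h_2$), two universal vertices force $\delta\ge 2$: since $c\in\{0,1\}$ the extreme value occurs with multiplicity $2$, so either two isolated or two universal vertices are present, contradicting the range $n-2$ that the multiset exhibits. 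This yields $h_2(G)\ge\lceil n/2\rceil+1=f_0(n,2)$, matching Theorem \ref{main}. The delicate point throughout is the bookkeeping of the equality conditions sharply enough that the forced multiset is determined uniquely; once it is, the graphical contradiction is immediate.
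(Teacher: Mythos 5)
Your proof is correct and is essentially the paper's own argument in histogram language: your count $(n-m)+P$ is exactly the paper's count of close \emph{consecutive} pairs in the sorted degree sequence, your key input $\Delta-\delta\le n-2$ is the same, and your equality analysis for even $n$ (degree multiset $c,c,c+2,c+2,\dots,c+n-2,c+n-2$ with $c\in\{0,1\}$, refuted by two isolated vertices forcing $\Delta\le n-3$, resp.\ two universal vertices forcing $\delta\ge 2$) reproduces the paper's two terminal subcases verbatim. The only cosmetic difference is the odd case, which you dispatch by a multiplicity-counting contradiction (either $n\le 2m-1$ vertices, or a single multiplicity-$1$ value that cannot supply both endpoints of the forced adjacent occupied pair) rather than the paper's two-adjacent-close-pairs argument; both are equally short and valid.
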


\begin{theorem}\label{tri}
The conjecture holds for
$ k < n \leq 2k$.
\end{theorem}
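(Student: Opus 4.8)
The plan is to specialize to the regime $k<n\le 2k$, where $\lceil n/k\rceil=2$, so that
\[
f_0(n,k)=\binom{k+1}{2}+\binom{n-k-1}{2},
\]
and a direct computation gives $\binom{n}{2}-f_0(n,k)=(n-k-1)(k+1)$. Writing $m=n-k-1$ and calling a pair of vertices \emph{far} when their degrees differ by at least $k$, the conjecture in this range becomes equivalent to the bound $(\text{number of far pairs})\le m(k+1)$.

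First I would record the structural consequences of $n\le 2k$. If $(u,v)$ is far with $d_u<d_v$, then $d_u\le d_v-k\le(n-1)-k=m$ and $d_v\ge d_u+k\ge k$; since $m<k$, the set $L=\{v:d_v\le m\}$ of low vertices and the set $H=\{v:d_v\ge k\}$ of high vertices are disjoint, every far pair joins $L$ to $H$, and a vertex of intermediate degree lies in no far pair. Moreover every cross pair $(u,v)\in L\times H$ already satisfies $d_v-d_u\ge k-m\ge 1$, so all cross differences are strictly positive. From here the quick estimate is
\[
k\cdot(\text{far pairs})\le\sum_{\text{far }(u,v)}(d_v-d_u)\le\sum_{(u,v)\in L\times H}(d_v-d_u)=|L|\,S_H-|H|\,S_L,
\]
where $S_H=\sum_{v\in H}d_v$ and $S_L=\sum_{u\in L}d_u$: the first inequality holds because each far difference is at least $k$, the second because the omitted cross differences are positive. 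Thus the theorem reduces to the purely degree-sequence statement
\[
|L|\,S_H-|H|\,S_L\le k(k+1)m,
\]
which is sharp, being an equality for $K_{k+1}\cup\overline{K}_{m}$ (a clique of degree $k$ together with $m$ isolated vertices), the conjectured extremal graph, where $|H|=k+1$, $|L|=m$, $S_H=k(k+1)$, $S_L=0$.

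The main obstacle is proving this last inequality, and it is exactly here that graphicality must enter: ignoring realizability one can make $|L|\,S_H-|H|\,S_L$ exceed $k(k+1)m$. When $\min(|L|,|H|)\le m$ the bound is easy — say $|H|=p\le m$ (the case $|L|\le m$ is symmetric under complementation, which preserves far pairs and swaps $L$ and $H$); then all far pairs lie among the $p(n-p)$ pairs joining $H$ to its complement, and $p\mapsto p(n-p)$ is increasing for $p\le m<n/2$, so the count is at most $m(n-m)=m(k+1)$. The hard, genuinely graphical regime is the balanced one $|L|,|H|\ge m+1$ (which forces $m+1\le|L|,|H|\le k$), where the trivial cross bound overshoots the target.

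For the balanced regime I would control $S_H$ by the Erdős–Gallai inequality applied at $r=|H|$, namely $S_H\le|H|(|H|-1)+\sum_{d_i<k}\min(d_i,|H|)$, and feed it into the displayed degree-sequence inequality. The model computation is $n=2k$, where balancedness forces $|L|=|H|=k$, Erdős–Gallai collapses to $S_H-S_L\le k(k-1)$, and the estimate closes with room to spare since $k(S_H-S_L)\le k^2(k-1)<k(k+1)(k-1)=k(k+1)m$. Turning this into the clean bound $|L|\,S_H-|H|\,S_L\le k(k+1)m$ across the whole balanced range — equivalently, showing that the clique-plus-isolated configuration maximizes $|L|\,S_H-|H|\,S_L$ among all graphical degree sequences — is the crux of the argument, and I expect the bookkeeping in the Erdős–Gallai step (where the coefficients $|L|$ and $|H|$ differ) to be the delicate part.
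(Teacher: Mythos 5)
Your reduction is correct as far as it goes: for $k<n\le 2k$ one indeed has $\binom{n}{2}-f_0(n,k)=(k+1)(n-k-1)$, every far pair joins $L=\{v:d_v\le m\}$ to $H=\{v:d_v\ge k\}$, and the chain $k\cdot(\text{number of far pairs})\le\sum_{(u,v)\in L\times H}(d_v-d_u)=|L|S_H-|H|S_L$ is valid; your unbalanced case $\min(|L|,|H|)\le m$ and the single value $n=2k$ are also handled correctly. But the proof is not finished: the balanced regime $m+1\le |L|,|H|\le k$ with $k<n<2k$, which is where essentially all of the theorem lives, is exactly what you defer as ``the crux,'' with only the expectation that Erd\H{o}s--Gallai bookkeeping will close it. This is a genuine gap, and it is not routine bookkeeping. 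The direct generalization of your $n=2k$ model computation fails for small $m$: take $k=10$, $n=12$ (so $m=1$, target $k(k+1)m=110$) and $|L|=|H|=6$ with no intermediate vertices. Erd\H{o}s--Gallai at $r=|H|=6$ gives only $S_H\le 6\cdot 5+S_L$, hence $|L|S_H-|H|S_L=6(S_H-S_L)\le 180>110$. The target survives only because this configuration is not realizable by any graph: each vertex of $H$ has at least $|L|-m=5$ neighbours inside $L$, so $e(H,L)\ge |H|(|L|-m)=30$, while the $L$-degrees force $e(H,L)\le m|L|=6$ (equivalently, $S_H\ge k|H|=60$ contradicts $S_H\le 30+S_L\le 36$). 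So in the balanced regime you must, for each pair $(|L|,|H|)$ and each split involving intermediate vertices, either prove non-realizability or prove the bound, and that requires combining Erd\H{o}s--Gallai with the definitional constraints $S_H\ge k|H|$, $S_L\le m|L|$ and with bipartite adjacency counting between $L$ and $H$. None of that analysis appears in your proposal, and it is precisely where the difficulty sits.

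For comparison, the paper closes this case by a different and much cheaper device, avoiding any such optimization over degree sequences. Lemma 1 (the postcard argument) supplies $k+1$ pairwise close vertices, whose degrees lie in some window $[x,x+k-1]$; cutting $[0,n-1]$ into the five intervals $[0,x)$, $[x,t)$, $[t,k)$, $[k,x+k)$, $[x+k,n)$ (with $t=n-k$, so $t-1=m$) and writing $a,b,c,d,e$ for the number of vertices with degree in each, all non-close pairs are of types $(a,d)$, $(a,e)$, $(b,e)$, and a short algebraic step --- using that $e<b+c$ or $a<c+d$, since $a+e\le t-1$ while $b+c+d\ge k+1$ --- gives $a(d+e)+be\le(a+e)(b+c+d)\le(k+1)(t-1)$. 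The anchoring of the window by Lemma 1 is what substitutes for all the realizability constraints your route would need; if you wish to salvage your approach, the missing ingredient is the $L$--$H$ counting above, carried out over the whole balanced range.
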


\begin{theorem}\label{chetyre}
The conjecture holds if
$ k$ divides $n$ ($k | n$).
\end{theorem}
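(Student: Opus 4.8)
The plan is to reduce everything to the degree multiset and then split the bound into a clean combinatorial estimate plus a graphicality correction. Since $h_k(G)$ depends only on the list of vertex degrees, write $m=n/k$ and observe that when $k\mid n$ the formula collapses: here $\lceil n/k\rceil=m$ and $n-k(m-1)-1=k-1$, and since $\binom{k+1}{2}+\binom{k-1}{2}=k^2-k+1=2\binom{k}{2}+1$, one gets $f_0(n,k)=m\binom{k}{2}+1$. I would first prove the universal combinatorial bound $h_k(G)\ge m\binom{k}{2}$, and then show that equality forces a non‑graphical degree sequence, which supplies the extra $+1$.

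For the combinatorial bound, partition the range $\{0,1,\dots,n-1\}$ of possible degrees into the $m$ consecutive blocks $I_j=[(j-1)k,\,jk-1]$, $j=1,\dots,m$, and let $c_j$ be the number of vertices whose degree lies in $I_j$. Two vertices with degrees in the same $I_j$ differ by at most $k-1<k$, so $h_k(G)\ge\sum_{j=1}^m\binom{c_j}{2}$; as $\binom{x}{2}$ is convex and $\sum_j c_j=mk$, Jensen gives $\sum_j\binom{c_j}{2}\ge m\binom{k}{2}$. Equality in both steps forces $c_j=k$ for every $j$ together with no close pair spanning two different blocks. Sorting $d_1\le\cdots\le d_n$, this is exactly $d_{jk+1}-d_{jk}\ge k$ for $j=1,\dots,m-1$. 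Writing each degree of block $j$ as $(j-1)k$ plus an offset in $[0,k-1]$ and letting $y_j\le x_j$ be the smallest and largest offsets occurring in block $j$, the separation condition becomes a single chain $y_1\le x_1\le y_2\le x_2\le\cdots\le y_m\le x_m$ inside $[0,k-1]$; in particular $x_1\le y_m$.

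The heart of the argument — and the step I expect to be the main obstacle — is to show that such an equality configuration is never graphical (note $k\mid n$ forces $m\ge2$). I would do this by one double‑count of the edges meeting the top block $B_m$, the $k$ vertices of largest degree. Counting edges inside $B_m$ and from $B_m$ to the rest,
\[
\sum_{u\in B_m}\deg u \;=\; 2e(B_m)+e(B_m,V\setminus B_m)\;\le\; k(k-1)+\sum_{v\notin B_m}\min(\deg v,k).
\]
The $k$ vertices of the bottom block $B_1$ have degree at most $x_1<k$, the $(m-2)k$ middle vertices contribute at most $k$ each, and every vertex of $B_m$ has degree at least $(m-1)k+y_m$. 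Substituting $\sum_{u\in B_m}\deg u\ge (m-1)k^2+ky_m$, together with $\sum_{v\in B_1}\deg v\le kx_1$ and the middle contribution $(m-2)k^2$, and simplifying, the inequality collapses to $ky_m\le kx_1-k$, i.e.\ $y_m<x_1$, contradicting $x_1\le y_m$ from the chain. Hence no graph attains $h_k=m\binom{k}{2}$, so $h_k(G)\ge m\binom{k}{2}+1=f_0(n,k)$ for every $G$, which is precisely the conjecture for $k\mid n$. The delicate point is that the purely combinatorial optimum fails to be graphical only through this boundary tension between the extreme blocks; reassuringly, the same double‑count reproduces the classical $k=1$ obstruction (no graph has all degrees distinct) as a special case.
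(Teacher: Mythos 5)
Your proposal is correct, and its second half takes a genuinely different route from the paper. The first half is identical: both you and the paper partition the degree range $\{0,\dots,n-1\}$ into $m=n/k$ blocks of length $k$, apply Jensen to get $h_k(G)\ge m\binom{k}{2}$, observe $f_0(n,k)=m\binom{k}{2}+1$, and conclude that a counterexample would force every block to contain exactly $k$ vertices with no close pair spanning two blocks. The divergence is in how this equality configuration is killed. The paper splits into parity cases on $n/k$: for even $n/k$ it pairs each vertex of the lower half with a vertex of the upper half whose degree exceeds it by at least $n/2$ and compares edge counts between the halves to get $E>E$; for odd $n/k$ it needs a more delicate grouping involving $V_1$, $V_{s+1}$, $V_{2s+1}$ plus pairings. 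You instead extract from the separation condition the offset chain $y_1\le x_1\le y_2\le\cdots\le y_m\le x_m$ (equivalent to the paper's observation that $\deg u-\deg v\ge (j-i)k$ across blocks) and run a single double count of the edges meeting the top block: $(m-1)k^2+ky_m\le \sum_{u\in B_m}\deg u\le k(k-1)+kx_1+(m-2)k^2$, which collapses to $y_m\le x_1-1$ and contradicts $x_1\le y_m$. I verified the arithmetic and the boundary cases ($m=2$, where the middle contribution vanishes, and the needed $m\ge 2$); everything holds. Your argument is shorter and uniform in $m$, avoiding the parity case split entirely, and as you note it specializes at $k=1$ to the classical fact that a graph cannot have all degrees distinct; the paper's pairing argument is heavier here but is of a style (comparing matched vertices across a cut) that recurs in its treatment of the harder non-divisible cases.
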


\begin{theorem}\label{pyat}
The conjecture holds if the remainder of the division of $n$ by $k$
is at least $2k/3$.
\end{theorem}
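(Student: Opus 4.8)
Throughout write $n=qk+r$ with $q=\lfloor n/k\rfloor$ and $1\le r\le k-1$; the hypothesis is $r\ge 2k/3$, and since $r\ge1$ we have $\lceil n/k\rceil=q+1$, so the target simplifies to
$$f_0(n,k)=(q-1)\binom{k}{2}+\binom{k+1}{2}+\binom{r-1}{2}.$$
The plan is to sort the degrees $d_1\le\dots\le d_n$, form the gap sequence $g_i=d_{i+1}-d_i$, and study the \emph{closeness graph} $H$ on $[n]$ in which $i\sim j$ iff $|d_i-d_j|<k$, so that $h_k(G)=e(H)$. The total budget is $\sum_i g_i=d_n-d_1\le n-1=qk+r-1$. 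Call a gap \emph{big} if $g_i\ge k$; since each big gap costs at least $k$ and the budget is below $(q+1)k$, there are at most $q$ big gaps, and they cut $[n]$ into at most $q+1$ maximal segments whose internal gaps are all $<k$. Pairs in different segments are far, so $h_k=\sum_{\mathrm{seg}}e(H[\mathrm{seg}])$. I would also use the complement symmetry $d\mapsto n-1-d$, which preserves $H$ and lets me orient the extreme segments favourably.

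First I would settle the main case of exactly $q$ big gaps. Then the non-big gaps sum to at most $(qk+r-1)-qk=r-1<k$, so every segment has span $<k$ and is therefore a clique of $H$; writing $c_0,\dots,c_q$ for the segment sizes gives $h_k=\sum_i\binom{c_i}{2}$ with $\sum_i c_i=n$. Two constraints pin the minimum. By Lemma~1, $H$ contains $k+1$ pairwise-close vertices, which lie in one segment, so $\max_i c_i\ge k+1$. For the second, apply a handshake/Erd\H{o}s--Gallai count to the bottom segment (size $c_0$, largest degree $d$) and the top segment (size $c_q$, smallest degree $D$): each top vertex has at most $c_q-1$ neighbours inside the top segment and at most $n-c_0-c_q$ outside both extremes, hence at least $D-n+c_0+1$ neighbours among the bottom $c_0$ vertices, while each bottom vertex absorbs at most $d$ edges, so
$$c_q\,(D-n+c_0+1)\le c_0\,d.$$
Since the extreme segments are separated by all $q$ big gaps, $D-d\ge qk$, and with $n=qk+r$ this yields $c_q(c_0+1-r)\le(c_0-c_q)d$; orienting so that $c_0\le c_q$ makes the right-hand side $\le0$ and forces $c_0\le r-1$. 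Minimising the Schur-convex $\sum_i\binom{c_i}{2}$ under $\sum c_i=n$, one size $\ge k+1$ and one size $\le r-1$ then drives the constrained sizes to $k+1$ and $r-1$ and balances the rest to $k$, giving exactly $f_0$. This step needs no hypothesis on $r$.

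The real work is the remaining case of at most $q-1$ big gaps. Now the internal budget is at least $k$, so a segment may have span $\ge k$ and genuine internal far pairs, and the identity $h_k=\sum\binom{c_i}{2}$ fails; the adversary trades budget for internal far pairs to try to dip below $f_0$. The plan is to bound the far pairs $\binom{n}{2}-h_k$ directly: cross-segment far pairs are governed by how the big gaps split the sequence, while the internal far pairs of a segment of span $\sigma$ are governed by $\sigma$ (each far pair needs an accumulated sub-span $\ge k$, and the \emph{bridging} vertices forced between two spread-apart clumps stay close to both sides, so producing internal far pairs is never cheaper per unit of budget than simply inserting a big gap). Feeding these per-budget bounds back, together with the handshake inequality at the two ends, into the optimisation of $\sum_i\binom{c_i}{2}-\sum_i F_i$ (with $F_i$ the internal far pairs) is where the hypothesis enters: the estimate closes exactly when $r\ge 2k/3$, the point at which the far pairs gained by internal spreading can no longer offset the $\binom{k+1}{2}+\binom{r-1}{2}$ penalty that graphicality imposes at the extremes.

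The main obstacle is precisely this last case. In the clean case the argument is forced and uniform in $r$, but once a segment is allowed to spread internally the two engines---the convexity bound $\sum_i\binom{c_i}{2}$ and the handshake/graphicality constraints---must be balanced against a delicate accounting of far pairs against consumed budget, and this balance is tight only for $r\ge 2k/3$. Turning the heuristic per-budget estimate above into a rigorous inequality, and checking the small boundary cases ($q=1$, tiny segments, overlapping extreme windows), is where I expect essentially all the difficulty to lie; the restriction $r\ge 2k/3$ in the statement marks exactly the range in which it can currently be pushed through.
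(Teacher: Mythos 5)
Your Case 1 (exactly $q$ big gaps) is sound and needs no hypothesis on $r$: there every segment has span less than $k$, so the closeness graph is a disjoint union of cliques; Lemma 1 forces one segment of size at least $k+1$, your handshake count between the two extreme segments (after complementing so that $c_0\le c_q$) forces $c_0\le r-1$, and smoothing the convex sum $\sum_i\binom{c_i}{2}$ gives exactly $f_0(n,k)$ --- this is essentially the mechanism of the paper's Theorem \ref{tri}. But the proposal does not prove Theorem \ref{pyat}, because the complementary case --- at most $q-1$ big gaps, where some segment has span at least $k$ and carries internal far pairs --- is left as a heuristic. The assertion that ``producing internal far pairs is never cheaper per unit of budget than simply inserting a big gap'' is never stated as a precise inequality, let alone proved, and you yourself concede that ``essentially all the difficulty'' lies in this step. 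Since this is precisely the case in which the hypothesis $r\ge 2k/3$ must be invoked, what you have proved is only the easy half; the theorem itself remains open in your write-up.

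I am also skeptical that the per-budget heuristic can be closed using only degree-sequence accounting plus graphicality at the two extremes, because the paper's treatment of this case needs graph-theoretic input in the \emph{middle} of the degree range. The paper fixes the partition of $\{0,\dots,n-1\}$ into $i$ intervals of length $k$ and one of length $t$, shows by convexity (Lemma \ref{l2}) that in a counterexample every group has at least $t$ vertices, and then proves a quantitative lower bound on close pairs lying in \emph{different} groups (Lemma \ref{l4}): at least $\frac{(t+p_1)(t+p_2)(p_2+1)}{2k-p_2-3}$, where $t+p_2\le t+p_1$ are the two smallest group sizes. That bound comes from an edge/non-edge count between the bottom and top groups (Lemma \ref{l3}) propagated along the whole chain of groups (Lemma \ref{l5}) --- i.e., the same handshake mechanism you use at the extremes, but redeployed to manufacture close pairs straddling group boundaries, a quantity your budget accounting never sees. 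The proof then balances this against the refined within-group convexity bound (Lemma \ref{l36}) and verifies explicit polynomial inequalities in $k,t,p_1,p_2$, and it is only inside those inequalities that $t\ge 2k/3$ enters. To complete your argument you would need an analogue of Lemma \ref{l4}; without it, an adversary who spends budget spreading one segment internally is constrained only by your extreme-segment handshake, and the estimate does not close.
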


%%==================================================================
\newpage

%%==================================================================

\section{Proofs of the Main Results}

\begin{definition}\label{main} Let $G=(V,E)$ be a graph with $n$ vertices.
Define numbers $i,t$ as follows:
$n=ki+t$, $1 \leq t \leq k$, i.e., $i=\lceil n/k\rceil-1$, $t=n-k(\lceil n/k\rceil-1)$.
An unordered pair of vertices $(x,y)$ of the graph $G$ is called \emph{close} if $|\deg(x)-\deg(y)| < k$.
Denote by $A(G)$ the set of close pairs:
$A(G)=\{(x,y)|x,y \in V(G), |\deg(x)-\deg(y)| < k\}$.
\end{definition}

\begin{proof}[Proof of Theorem 1.]
Let us construct such a graph. Partition the vertices into $i + 1$ groups
$V_0,\ldots,V_i$ with indices
$0, \dots, i$. Group $V_0$ contains $t-1=n-ki-1$ vertices, group $V_{\lfloor (i+1)/2\rfloor }$ contains $k + 1$
vertices, and the others contain $k$ vertices each (the total number of vertices is
$n-ki-1+ki+1=n$, as required.)

An edge between two vertices (which may lie in
the same group or in different groups) is drawn
if and only if the sum of the indices of their groups is strictly greater than $i$. In particular,
vertices in group $V_0$ have degree 0.
Let us verify that this graph works. Take a vertex $v$ from group $V_j$,
$j\in \{1,\ldots,i\}$, and compute its
degree. Vertex $v$ is connected to all vertices from the $j$ groups
$V_{i-j+1},\ldots,V_i$ and
only to them. Then, to all these groups, except possibly the groups with indices $j$ and
$\lfloor (i+1)/2\rfloor$,
there are exactly $k$ edges (to group $V_j$ there is 1 less edge, because there is no edge
to the vertex $v$ itself; to group $V_{\lfloor (i+1)/2\rfloor}$ there is 1 more
edge, because it contains 1 more vertex; if it is the same group,
these effects cancel out and the number of edges to group $V_j$ is $k$).
Note that among the groups $V_{i-j+1},\ldots,V_i$, group
$V_j$ is present if and only if
$j\geqslant i-j+1 \Leftrightarrow j\geqslant \lceil (i+1)/2\rceil$,
and group $V_{\lfloor (i+1)/2\rfloor}$ is present if and only if
$i-j+1\leqslant \lfloor (i+1)/2\rfloor \Leftrightarrow j\geqslant (i+1)-
\lfloor (i+1)/2\rfloor=\lceil (i+1)/2\rceil$ — so either both of these special groups are present, or neither is. Thus,
in total, from our vertex $v$,
there are exactly $kj$ edges to the groups $V_{i-j+1},\ldots,V_i$. Therefore, the pairs of vertices of the graph $G$
whose degrees differ by less than $k$ are precisely the pairs
of vertices from the same group. Obviously, the number of such pairs is exactly $f_0(n,k)$.
\end{proof}

\begin{proof}[Proof of Theorem 2.]
For $k=1$ we have $f_0(n,1)=1$, i.e.,
in this case we need to prove that
the graph contains two vertices of equal
degree. This is well known and almost obvious: if
the degrees of all vertices of the graph $G$ are distinct, then they take
all values from 0 to $n-1$ exactly once, but this is impossible because a vertex of degree $n-1$ must be adjacent to all others, while a vertex of degree 0 is adjacent to none.

2) Let $k=2$, then
$f_0(n,2)= \lceil n/2\rceil + 1$.

Suppose the contrary: among the pairs of vertices of the graph $G$ there are at most $\lceil n/2\rceil$
close pairs. Label the vertices of the graph $G$ as $v_1,\ldots,v_{n}$ in order of
increasing degrees:
$d_1\leqslant d_2\leqslant \ldots \leqslant d_{n}$, where
$d_i=\deg(v_i)$.
Consider the $n-1$ pairs of vertices $(v_i,v_{i+1})$, call such pairs \emph{consecutive}.

Consider two cases.

2.1) Let $n=2m\geqslant 4$ be even, then $\lceil n/2\rceil=m$.

Suppose there are at most $m-1$ close pairs among the consecutive pairs, then there are at least $m$ non-close pairs.
Then $$d_{2m}-d_{1}=\sum_{i=1}^{2m-1}(d_{i+1}-d_i)\geqslant 2\cdot m,$$
a contradiction.

Suppose there are exactly $m$ close consecutive pairs
(and these are all close pairs). Arguing similarly, we get
that $d_{2m}-d_1\geqslant 2m-2=n-2$. The reasoning from item 1 shows
that this is the largest possible value for the difference of vertex degrees in a graph
on $n$ vertices,
so equality must hold everywhere. Thus, in all close consecutive pairs
the degrees are equal, and in all non-close pairs they differ by exactly 2. Furthermore,
two close consecutive pairs cannot occur consecutively: otherwise, there would be 3 vertices of the same degree and, consequently, a close non-consecutive pair. There are
$2m-1$ consecutive pairs in total, $m$ of which are close, meaning the close and non-close consecutive
pairs alternate, with the first and last consecutive pairs being close.
It follows that either $d_1=d_2=0$, $d_{2m}=2m-2$ (which is impossible: vertex
$v_{2m}$ can only be adjacent to vertices $v_3,\ldots,v_{2m-1}$, so
$d_{2m}\leqslant 2m-3$), or
$d_1=1$, $d_{2m}=d_{2m-1}=2m-1$ (which is also impossible: vertices
$v_{2m},v_{2m-1}$ are adjacent to everyone, so
$d_{1}\geqslant 2$). Contradiction.

2.2) Let $n=2m+1\geqslant 3$ be odd. Then $\lceil n/2\rceil=m+1$.
Suppose among the $2m$ consecutive pairs there are at most $m$ close ones, then
there are at least $m$ non-close ones and $d_{2m+1}-d_1\geqslant 2m$, a contradiction
(similar to item 1). Then there are exactly $m+1$ close consecutive pairs (and these are all close pairs), so among them there are two consecutive ones:
$d_{j+1}-d_j\leqslant 1$ and $d_{j+2}-d_{j+1}\leqslant 1$ for some
$j$. Since the pair $(v_j,v_{j+2})$ cannot be close,
we get $d_{j+1}-d_j=d_{j+2}-d_{j+1}=1$. Hence
$d_{2m+1}-d_1\geqslant 2\cdot 1+(m-1)\cdot 2=2m$, a contradiction,
similar to item 1.
\end{proof}

\begin{proof}[Proof of Theorem 3.]
Let $n=k+t$, $0<t\leqslant k$, then $f_0(n,k)=\binom{k+1}{2}+\binom{t-1}{2}$.
By Lemma 1, there exist $k+1$ pairwise close vertices in $G$.
Let the degrees of the vertices in this set lie between $x$ and $x+k-1$, where
$0\leqslant x\leqslant x+k-1\leqslant n-1=t+k-1$.
Then $0\leqslant x\leqslant t\leqslant k\leqslant x+k\leqslant n$.
Denote by $a,b,c,d,e$ respectively the number of vertices of the graph $G$
whose degrees belong to the half-intervals $[0,x)$, $[x,t)$,
$[t,k)$, $[k,x+k)$ and $[x+k,n)$, so $a+b+c+d+e=n=k+t$. Fix that $b+c+d \geq k+1$ (and hence $a+e \leq t-1$). Note that at least one
of the inequalities $e<b+c$, $a<c+d$ holds, since otherwise $t-1\geqslant a+e\geqslant b+2c+d\geqslant b+c+d\geqslant k+1$, which is false. Therefore
$ae\leqslant \max(a(b+c),e(c+d))\leqslant a(b+c)+e(c+d)$.
The number of non-close pairs of vertices of the graph $G$ does not exceed
$a(d+e)+be\leqslant ad+be+a(b+c)+e(c+d)=(a+e)(b+c+d)\leqslant (k+1)(t-1)$
(since the product of two numbers with sum $n$, one of which is at least $k+1>n/2$,
does not exceed $(k+1)(n-k-1)=(k+1)(t-1)$.) Then the number of close
pairs of vertices of $G$ is at least
$$
\binom{n}{2}-(k+1)(t-1)=\binom{k+1}{2}+\binom{t-1}{2}=f_0(n,k),
$$
as required.
\end{proof}

\begin{proof}[Proof of Theorem 4.] Divide the set $\{0, ..., n - 1\}$ into $n/k$
intervals of size $k$.
Let $V_j$ ($j=1,\ldots,n/k$) be the set of vertices of $G$
whose degree belongs to the $j$-th interval, $x_j=|V_j|$.
Since the degrees of any two vertices from $V_j$
differ by at most $k-1$, this already gives the estimate
$h_k(G)\geqslant \sum \binom{x_j}{2}\geqslant (n/k)\binom{k}{2}$
(by Jensen's inequality for the function $\binom{x}{2}$).
Note that
$f_0(n,k)=(n/k-2)\binom{k}{2}+\binom{k+1}{2}+\binom{k-1}{2}=
(n/k)\binom{k}{2}+1$. Thus, if $h_k(G)<f_0(n,k)$, then each $V_i$ contains exactly
$k$ vertices and for any pair of vertices
from $V_i$ and $V_j$ with $i\ne j$, their
degrees differ by at least $k$.
Note that this immediately implies that
$\deg u-\deg v\geqslant (j-i)k$ for $v\in V_i$, $u\in V_j$, where
$j>i$.
Let us analyze the cases.

Suppose $(2k)$ divides $n$, i.e., $n/k$ is even.
Partition the vertices of $G$ into 2 equal groups
(with degree $>(n-1)/2$ and with degree
$<(n-1)/2$). Assign to the vertices of the second group
vertices from the first group such that vertices in $V_i$ correspond to
vertices in $V_{i+n/(2k)}$, $i=1,\ldots,n/(2k)$.
In each pair, the degree of the vertex from the first group $u$ exceeds the degree of the corresponding
vertex from the second group $v$ by at least
$k\cdot (n/2k)=n/2$. Consequently, from $u$
there are strictly more edges to the second group than from $v$ to
the first group. Summing this over all $n/2$ pairs
we get that $E>E$, where $E$ is the number of edges between
the first and second groups. Contradiction.

Now let us consider the case when $n/k=(2s+1)$ is odd.
Let the first group be the union of sets $V_{s+2},\ldots,V_{2s+1}$,
the second group be the union of sets $V_{1},\ldots,V_{s+1}$.
Let $v_1,\ldots,v_k$ be the vertices
of set $V_{s+1}$; $u_1,\ldots,u_k$ be the
vertices of set $V_1$;
$w_1,\ldots,w_k$ be the
vertices of set $V_{2s+1}$.
Then $\deg u_i\leqslant \deg v_i-ks$,
so from the set $\{u_i,v_i\}$ to the first group there are
at most $\deg u_i+ks\leqslant \deg v_i$ edges.
On the other hand, from $w_i$ to the second group there are
at least $\deg w_i-(ks-1)\geqslant \deg v_i+1$
edges. To the remaining vertices of the second group, assign the remaining
vertices of the first group as in the previous case: to a vertex
from $V_i$ assign a vertex from $V_{i+s}$
($i=2,3,\ldots,s$). In each pair, the vertex $u$ from the first group
has a degree at least $ks$ greater than the vertex $v$ from
the second group,
so more edges go from it
to the first group than from $u$ to the second. Summing up, we again get
$E>E$, where $E$ is the number of edges between
the first and second groups. Contradiction.
\end{proof}

We preface the proof of Theorem 5 with a series of lemmas.
Using Theorems 2, 3, and 4, we assume that $k\geqslant 3$, $n\geqslant 2k+1$ and $k>t$.

\begin{lemma}\label{l2}
Let graph $G$ be a counterexample to the conjecture for some $n>k$,
$n=ki+t$, $1\leqslant t\leqslant k$.
Partition the set of numbers $\{0, 1, \dots, n-1\}$ into $i+1$ intervals (numbered from 0 to $i$)
so that the size of each interval is at most $k$. Then for any interval
of the partition, the graph $G$ contains at least $t$ vertices with a degree in that interval.
\end{lemma}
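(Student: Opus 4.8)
The plan is to bound $h_k(G)$ from below by counting only those close pairs that lie inside a single interval of the partition, and then to show by a convexity (smoothing) argument that if some interval contained fewer than $t$ vertices, this lower bound would already reach $f_0(n,k)$, contradicting the assumption that $G$ is a counterexample.

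First I would fix the partition $I_0,\ldots,I_i$ (which is feasible because $(i+1)k\geqslant ki+t=n$ since $t\leqslant k$), let $V_j$ be the set of vertices whose degree lies in $I_j$, and set $x_j=|V_j|$, so that $\sum_{j=0}^{i}x_j=n$. Since each interval contains at most $k$ integers, any two vertices of the same $V_j$ have degrees differing by at most $k-1<k$ and hence form a close pair. Therefore
$$
h_k(G)\;\geqslant\;\sum_{j=0}^{i}\binom{x_j}{2}.
$$

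Next, arguing by contradiction, suppose some interval $I_m$ satisfies $x_m\leqslant t-1$. I would then minimize $\sum_j\binom{x_j}{2}$ over all nonnegative integer tuples with $\sum_j x_j=n$ and $x_m\leqslant t-1$. The key observation is that the cost of adding one vertex to a bin currently of size $v$ is $\binom{v+1}{2}-\binom{v}{2}=v$, so it is always cheaper to enlarge a smaller bin; consequently the minimizer is as balanced as possible subject to the cap. Because $t-1<k$ while in the balanced configuration every other bin has height at least $k$, the cap is binding: at the minimum $x_m=t-1$, and the remaining $ki+1$ vertices are spread as evenly as possible over the other $i$ bins, giving one bin of size $k+1$ and $i-1$ bins of size $k$. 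This yields
$$
\sum_{j=0}^{i}\binom{x_j}{2}\;\geqslant\;\binom{t-1}{2}+\binom{k+1}{2}+(i-1)\binom{k}{2}\;=\;f_0(n,k),
$$
the final equality being exactly the definition of $f_0$ after substituting $i=\lceil n/k\rceil-1$ and $t-1=n-k(\lceil n/k\rceil-1)-1$. Combined with the displayed lower bound this forces $h_k(G)\geqslant f_0(n,k)$, contradicting that $G$ is a counterexample; hence every interval must contain at least $t$ vertices.

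The step to get right is the smoothing argument: one must confirm that lowering $x_m$ below $t-1$ never helps (it only forces extra mass into bins already at height $\geqslant k$, where the marginal cost $v\geqslant k>t-1$ is strictly larger), and that balancing the remaining mass is optimal by convexity of $\binom{x}{2}$. Everything else is bookkeeping: checking that the extremal tuple $(k,\ldots,k,k+1,t-1)$ reproduces $f_0(n,k)$ and that the partition into $i+1$ intervals of size at most $k$ is feasible.
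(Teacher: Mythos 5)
Your proposal is correct and takes essentially the same route as the paper: bound $h_k(G)$ below by the within-interval pairs $\sum_j\binom{x_j}{2}$, then argue by smoothing/convexity that if some $x_m\leqslant t-1$, the sum is already at least $\binom{t-1}{2}+\binom{k+1}{2}+(i-1)\binom{k}{2}=f_0(n,k)$, contradicting that $G$ is a counterexample. The paper phrases the smoothing as ``equalizing the maximum and minimum while preserving the sum,'' which is exactly your marginal-cost argument, so the two proofs coincide in substance.
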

\begin{proof}
Let $x_j$ be the number of vertices of $G$ with a degree
in the $j$-th interval ($j=0,\ldots,i$). Then $h_k(G)\geqslant \sum \binom{x_j}{2}$,
while $f_0(n,k)=\binom{t-1}{2}+\binom{k+1}{2}+(i-1)\binom{k}{2}=\sum \binom{y_j}{2}$,
where $y_0=t-1$, $y_1=k+1$, $y_2=\ldots=y_i=k$.
If the statement of the lemma is false, then $\min_j x_j\leqslant t-1$. By equalizing
the maximum and minimum of the numbers $x_1,\ldots,x_i$ while preserving
the sum, we can achieve that one of them becomes equal to $t-1$, and the sum
$\sum \binom{x_j}{2}$ can only decrease. But when
one of the $x_j$ is equal to $t-1$, the sum of the others is
$(k+1)+(i-1)k$, so the sum of the values of the convex function $\binom{x}{2}$
over the others is at least $\binom{k+1}{2}+(i-1)\binom{k}{2}$
(this can also be proven, for example, by equalizations preserving the sum). Hence
$\sum_j \binom{x_j}{2}\geqslant f_0(n,k)$ and $G$ is not a counterexample, a contradiction.
\end{proof}

Further, groups of vertices of the graph will be defined as in the previous lemma.

The following lemma holds for any fixed disjoint partition of the numbers $\{0, 1, ... n-1\}$ into $i+1$ intervals of size at most $k$. We will henceforth number the groups in order of increasing vertex degrees in them — from the zeroth to the $i$-th.

\begin{lemma}\label{l3}
Assume that in each group there are at least $t+p_2$ vertices for some integer $p_2 \geq 0$.
Let $x$ be the average degree of vertices in the zeroth group. Then the average degree of vertices in the $i$-th group is at most $ki+x-p_2-1$.
\end{lemma}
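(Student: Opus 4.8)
The plan is to count non-edges incident to the top group and to exploit a complement symmetry. Write $a=|V_0|$ and $b=|V_i|$ for the sizes of the zeroth and $i$-th groups; by hypothesis $a,b\ge t+p_2$. Let $x$ be the average degree in $V_0$ and $y$ the average degree in $V_i$, so the goal is $y\le ki+x-p_2-1$. First I would observe that the whole statement is invariant under passing to the complement graph $\bar G$: the map $d\mapsto n-1-d$ reverses the degree ordering, so it carries the partition of $\{0,\dots,n-1\}$ into itself with the groups listed in the opposite order (in particular $V_0\leftrightarrow V_i$, so $a$ and $b$ are exchanged while each group size is preserved), and a direct substitution shows that the inequality for $\bar G$ is literally the inequality for $G$. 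Hence I may assume without loss of generality that $a\le b$.

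The core estimate counts the non-edges between $V_0$ and $V_i$. Each vertex $v\in V_0$ has at most $\deg v$ neighbours inside $V_i$, hence at least $b-\deg v$ non-neighbours there; summing over $V_0$ (and using $(b-\deg v)^{+}\ge b-\deg v$ to stay valid even when some degrees exceed $b$) gives at least $ab-\sum_{v\in V_0}\deg v=ab-ax$ non-edges between the two groups. On the other hand, $\sum_{u\in V_i}(n-1-\deg u)$ counts all non-neighbour incidences of $V_i$, which in particular include every non-edge from $V_i$ to $V_0$. Comparing the two counts yields
$$
b(n-1)-\sum_{u\in V_i}\deg u\ \ge\ ab-ax,
$$
and dividing by $b$ gives $y\le (n-1)-a+\tfrac{a}{b}\,x$.

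Finally I would substitute $n-1=ki+t-1$ and reduce the desired bound $y\le ki+x-p_2-1$ to the inequality $t-a+p_2\le x\,\tfrac{b-a}{b}$. The left-hand side is $\le 0$ because $a\ge t+p_2$, while the right-hand side is $\ge 0$ because $x\ge 0$ and $a\le b$; this finishes the argument. I expect the only real subtlety to be the asymmetry between $a$ and $b$: the raw non-edge count is strong enough only when the bottom group is no larger than the top group, and the complement symmetry in the first step is exactly what disposes of the opposite case. One should also double-check that the non-edge bound $ab-ax$ remains correct when $V_0$ contains vertices of degree larger than $b$, which is why the truncation $(b-\deg v)^{+}$ is recorded explicitly.
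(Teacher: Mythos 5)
Your proof is correct and rests on exactly the paper's double count: edges between $V_0$ and $V_i$ are at most $ax$ while non-edges between them are at most the total non-degree $b(n-1-y)$ of $V_i$, giving $ab\le ax+b(n-1-y)$, which is precisely the paper's inequality in contrapositive form. The only difference is the endgame: where you invoke a complement-symmetry WLOG to assume $a\le b$, the paper instead bounds $ax+(t+p_2-x)b\le (t+p_2)\max(a,b)$ (legitimate since both coefficients are nonnegative once the contradiction hypothesis is in force) and concludes $\min(a,b)<t+p_2$, a contradiction --- so your complementation step, while valid, turns out to be unnecessary.
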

\begin{proof}
If the claim is false, then the average degree in the $i$-th group is greater than $ki+x-p_2-1$,
then the average non-degree (number of non-adjacencies) in the $i$-th group is less than $t-x+p_2$. Let
$y,z$ be the numbers of vertices in the zeroth and $i$-th groups, respectively.
Then between the zeroth and $i$-th groups there are at most $xy$
edges and fewer than $(t-x+p_2)z$ non-edges. Then
    $yz < xy + (t-x+p_2)z\le (t+p_2)\max(y,z)$, whence $\min(y,z)< t+p_2$ ---
    a contradiction.
\end{proof}

Let us highlight the inequalities used in what follows.

\begin{lemma}\label{l5}
    Let $i\geqslant 2$ be a natural number,
    $a_1,\ldots,a_{i}$ and
    $b_0,\ldots,b_{i-1}$ be nonnegative numbers.
    Also set
    $a_0=b_i:=0$.
    Then $$\sum_{j=0}^{i-1} b_ja_{j+1}\geqslant \min\{(b_j+a_j)
    (b_{j+1}+a_{j+1}),
    0\le j\le i-1\}.$$
\end{lemma}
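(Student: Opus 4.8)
The plan is to pass to a scale-invariant reformulation. Writing $c_j := a_j + b_j$ for $0 \le j \le i$, the right-hand side is exactly $M := \min_{0 \le j \le i-1} c_j c_{j+1}$, and the boundary conventions $a_0 = b_i = 0$ give $c_0 = b_0$ and $c_i = a_i$. First I would dispose of the degenerate case $M = 0$: there the right-hand side vanishes while the left-hand side is a sum of nonnegative products, so the inequality is immediate. Hence I may assume $M > 0$, and this forces every $c_j > 0$, since for $i \ge 2$ each index $j$ with $0 \le j \le i$ occurs as a factor in at least one of the products $c_{j'} c_{j'+1}$ with $0 \le j' \le i-1$.

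With all $c_j > 0$ I would introduce the normalized ratios $r_j := a_j / c_j \in [0,1]$, so that $b_j = (1 - r_j)\, c_j$, and the boundary data become $r_0 = 0$ and $r_i = 1$. Each summand then factors as $b_j a_{j+1} = (1 - r_j)\, r_{j+1}\, c_j c_{j+1}$, and since the coefficient $(1 - r_j) r_{j+1}$ is nonnegative I may bound $c_j c_{j+1} \ge M$ termwise. This reduces the entire claim to the single dimensionless inequality $\sum_{j=0}^{i-1} (1 - r_j)\, r_{j+1} \ge 1$, where $r_0 = 0$, $r_i = 1$, and $r_j \in [0,1]$.

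The reduced inequality I would settle by telescoping. For each $j$ one computes $(1 - r_j) r_{j+1} - (r_{j+1} - r_j) = r_j (1 - r_{j+1}) \ge 0$, using $r_j \ge 0$ and $r_{j+1} \le 1$. Summing over $j$ and telescoping the comparison term gives $\sum_{j=0}^{i-1} (1 - r_j) r_{j+1} \ge \sum_{j=0}^{i-1} (r_{j+1} - r_j) = r_i - r_0 = 1$, which is precisely what is needed, and combining with the previous paragraph yields $\sum_{j=0}^{i-1} b_j a_{j+1} \ge M$.

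The real content lies in the normalization carried out in the first two steps: once the problem is rescaled so that each factor $c_j c_{j+1}$ is replaced by the common lower bound $M$, the surviving inequality is elementary. The one point demanding genuine care is verifying that the boundary conditions $r_0 = 0$ and $r_i = 1$ emerge correctly from $a_0 = b_i = 0$, since it is exactly these two endpoints that make the telescoped sum equal $r_i - r_0 = 1$ rather than something strictly smaller; losing either endpoint would break the bound. I would also note in passing that equality propagates back from $r_j(1 - r_{j+1}) = 0$ holding for every $j$, which pins down the extremal configurations of the lemma.
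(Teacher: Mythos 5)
Your proof is correct, and it takes a genuinely different route from the paper's. The paper argues by induction on $i$: the base case $i=2$ is the two-term estimate $b_0a_1+b_1a_2\geqslant \min(b_0,a_2)(a_1+b_1)$, and the inductive step does a case analysis on whether the minimum over the first $i-1$ products is attained at the term $(b_{i-2}+a_{i-2})a_{i-1}$, reusing the same two-term trick to absorb $b_{i-1}a_i$. Your argument avoids induction entirely: after disposing of the degenerate case $M=0$, you normalize by $c_j=a_j+b_j>0$, set $r_j=a_j/c_j$, bound each factor $c_jc_{j+1}$ below by $M$ termwise (legitimate precisely because the coefficients $(1-r_j)r_{j+1}$ are nonnegative), and reduce to the dimensionless inequality $\sum_{j=0}^{i-1}(1-r_j)r_{j+1}\geqslant 1$ with $r_0=0$, $r_i=1$, which follows by comparison with the telescoping sum $\sum_{j=0}^{i-1}(r_{j+1}-r_j)=1$ via the identity $(1-r_j)r_{j+1}-(r_{j+1}-r_j)=r_j(1-r_{j+1})\geqslant 0$. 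All steps check out, including the observation that $M>0$ forces every $c_j>0$ (each index occurs as a factor in at least one of the products entering the minimum). What your approach buys: it is non-inductive, it makes the role of the boundary conditions $a_0=b_i=0$ completely transparent (they are exactly what pin $r_0=0$ and $r_i=1$, hence the telescoped total of $1$), and it exposes the scale-invariant structure that the paper's case analysis hides. One small caveat: your closing remark on equality is incomplete as stated --- equality in the lemma also requires $c_jc_{j+1}=M$ for every $j$ whose coefficient $(1-r_j)r_{j+1}$ is positive, not merely $r_j(1-r_{j+1})=0$ for all $j$; but this is an aside and does not affect the validity of the proof.
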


\begin{proof}
    Proof by induction on $i$.

    Base case $i=2$: $b_0a_1+b_1a_2\geqslant \min(b_0,a_2)(a_1+b_1) = \min\{b_0(a_1+b_1), (b_1+a_1)a_2\}$.

    Inductive step from $i-1$ to $i\ge 3$. Using the induction hypothesis,
    we reduce the problem to verifying the inequality
    \begin{align*}
        &\min\{b_0(b_1+a_1),\ldots,
    (b_{i-2}+a_{i-2})a_{i-1}\}+b_{i-1}a_i \ge \\
    &\min\{b_0(b_1+a_1),\ldots,
    (b_{i-1}+a_{i-1})a_{i}\}
    \end{align*}
    \\
    Either the minimum on the left-hand side is not less than that on the right-hand side (and then the inequality holds), or the left-hand side is equal to
    \begin{align*}
        (b_{i-2}+a_{i-2})\cdot a_{i-1} + b_{i-1}\cdot a_i\geqslant (b_{i-1}+a_{i-1})\min\{a_i,b_{i-2}+a_{i-2}\}\\
        = \min\{(b_{i-1}+a_{i-1})a_i, (b_{i-1}+a_{i-1})(b_{i-2}+a_{i-2})\} \geq \min\{b_0(a_1+b_1), \dots ,(b_{i-1}+a_{i-1})a_i\}
    \end{align*}
\end{proof}

\begin{lemma}\label{l35}
Let $m$ be a natural number,
$x_1,\ldots,x_m$ be integers,
$\theta(x)$ be a function of an integer argument,
defined on the interval $[\min(x_1,\ldots,x_m),\max(x_1,\ldots,x_m)]$ and convex on it, i.e.,
the difference
$\theta(x)-\theta(x-1)$ is increasing.
Suppose that integers $A,B,k$
are such that $A+B=m$, $Ak+B(k+1)=\sum x_i$. Then $\sum \theta(x_i)\ge A\cdot \theta(k)+B\cdot \theta(k+1)$.
\end{lemma}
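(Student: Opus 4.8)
This is a discrete Jensen inequality: it asserts that among all integer tuples $(x_1,\dots,x_m)$ with the prescribed sum $Ak+B(k+1)$, the quantity $\sum\theta(x_i)$ is minimized by the maximally balanced tuple consisting of $A$ copies of $k$ and $B$ copies of $k+1$. I would prove it by the supporting-line (tangent-line) method rather than by Jensen's inequality directly, since $\theta$ is defined only at integers. Write $\delta(x):=\theta(x)-\theta(x-1)$; convexity of $\theta$ means precisely that $\delta$ is nondecreasing.

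First I would construct the affine function $L$ whose graph is the line through the two points $(k,\theta(k))$ and $(k+1,\theta(k+1))$, so that $L(k)=\theta(k)$, $L(k+1)=\theta(k+1)$, and $L$ has slope $\delta(k+1)$. The crux is the pointwise bound
$$\theta(x)\ge L(x)\quad\text{for every integer }x\in[\min_j x_j,\max_j x_j].$$
I would establish this by telescoping $\delta$ against the constant slope $\delta(k+1)$ and invoking monotonicity of $\delta$: for $x\ge k+1$ one has $\theta(x)-\theta(k+1)=\sum_{j=k+2}^{x}\delta(j)\ge (x-k-1)\,\delta(k+1)$ because each summand is at least $\delta(k+1)$, while for $x\le k$ one has $\theta(k)-\theta(x)=\sum_{j=x+1}^{k}\delta(j)\le (k-x)\,\delta(k+1)$ because each summand is at most $\delta(k+1)$; in both cases this rearranges to $\theta(x)\ge L(x)$.

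Given the pointwise bound, the conclusion is immediate linear algebra: summing over $i$ gives $\sum_i\theta(x_i)\ge\sum_i L(x_i)$, and since $L$ is affine, $\sum_i L(x_i)$ depends only on $\sum_i x_i$ and $m$. Using the hypotheses $\sum_i x_i=Ak+B(k+1)$ and $A+B=m$ together with affinity, $\sum_i L(x_i)=A\,L(k)+B\,L(k+1)=A\,\theta(k)+B\,\theta(k+1)$, which is exactly the desired right-hand side.

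The one place requiring care — and the step I expect to be the only real obstacle — is ensuring that $\theta(k)$ and $\theta(k+1)$ are actually defined, i.e. that $k,k+1$ lie in the domain $[\min_j x_j,\max_j x_j]$. This follows because the arithmetic mean $\frac1m\sum_i x_i=k+B/m$ lies in $[k,k+1]$, so $\min_j x_j\le k$ and $\max_j x_j\ge k+1$ unless all $x_i$ coincide; the genuinely degenerate cases (all $x_i$ equal, or $A=0$, or $B=0$) are trivial, since then one of the two target values carries coefficient $0$ and need not be evaluated. As an alternative fully in the spirit of Lemma \ref{l2}, one could instead argue by a smoothing step: whenever two entries differ by at least $2$, replacing $x_i,x_j$ by $x_i+1,x_j-1$ changes the sum of values by $\delta(x_i+1)-\delta(x_j)\le 0$ and strictly decreases $\sum_i x_i^2$, so after finitely many such steps one reaches the balanced configuration with values in $\{k,k+1\}$, whose counts are forced to equal $A$ and $B$ by the sum condition.
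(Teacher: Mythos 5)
Your core idea --- the discrete supporting line $L$ through $(k,\theta(k))$ and $(k+1,\theta(k+1))$, the telescoping proof of the pointwise bound $\theta(x)\ge L(x)$, and the observation that $\sum_i L(x_i)=A\,L(k)+B\,L(k+1)$ follows from affinity together with $A+B=m$ and $Ak+B(k+1)=\sum_i x_i$ --- is correct, and it is a genuinely different route from the paper's: the paper proves the case $A,B\ge 0$ by a smoothing (sum-preserving equalization) process, exactly your fallback sketch, and then handles the remaining case by a separate reduction. Your tangent-line computation is cleaner and, importantly, the algebraic identity it rests on is valid for \emph{any} signs of $A$ and $B$.

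There is, however, a genuine gap precisely at the step you single out as the only delicate one. The lemma assumes only that $A,B,k$ are integers, not that $A,B\ge 0$, and this extra generality is really used: in Lemma \ref{l36} the lemma is invoked with $A=i-1-(k-t-p_1-p_2)$, which is negative whenever $k-t-p_1-p_2>i-1$ (for instance $i=2$, $k=9$, $t=6$, $p_1=p_2=0$ gives $m=1$, $x_1=12$, $A=-2$, $B=3$); this is why the paper's own proof has a second case ``one of $A,B$ is negative.'' Your justification that $k,k+1\in[\min_j x_j,\max_j x_j]$ rests on the claim that the mean $k+B/m$ lies in $[k,k+1]$, and that claim is \emph{equivalent} to $0\le B\le m$, i.e.\ to $A,B\ge 0$; in the example above $k=9$ and $k+1=10$ both lie outside $[12,12]$, and your list of degenerate cases ($A=0$, $B=0$, all $x_i$ equal) does not cover this. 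The same defect affects your alternative smoothing sketch: a terminal configuration with values in $\{k,k+1\}$ and counts $A,B$ exists only when both counts are nonnegative. The repair is short and worth stating: for the conclusion to be meaningful at all, $\theta$ must be defined and convex on an interval containing all the $x_i$ together with $k$ and $k+1$ (the paper is equally silent on this), and on that interval your pointwise bound and the affinity computation go through verbatim for arbitrary integers $A,B$; alternatively, reduce the negative case to the nonnegative one as the paper does, by adjoining $|B|$ copies of $k+1$ (or $|A|$ copies of $k$) to the list and applying the already-proved case to the enlarged collection.
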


\begin{proof}   Consider two cases.

1) Both numbers $A,B$ are nonnegative.
Then by shifting the integers $x_1,\ldots,x_{m}$ while preserving
    their sum, we can achieve that any two of them differ by at most 1
    (if some two differ by more than 1, then bring them closer to each other while preserving the sum. The process terminates because the sum of the numbers does not change, and the sum of squares of the numbers strictly decreases).
    The left-hand side of our inequality does not increase in this process.
    In the end, any two numbers will differ by at most 1, and then $A$ of them will be equal to $k$,
    and the remaining $B$ will be equal to $k+1$ — so the left-hand side becomes equal to the right-hand side. Since the left-hand side did not increase during the process, it was originally at least the right-hand side.

    2) One of the numbers $A$, $B$ is negative (for example, $B$, the second case is similar). Then rewrite the inequality as
    $\sum \theta(x_i)+(-B)\theta(k+1)\geqslant (m-B)\theta(k)$
    and apply the statement proven in item 1) to the numbers
    $x_1,\ldots,x_m$, $(-B)$ times $k+1$, which are $m-B$ in total and their sum is $(m-B)k$.
\end{proof}

\begin{lemma}\label{l36}
    Suppose $n=ki+t$ vertices are partitioned into $i+1$ groups
    (as before, the partition is induced by the partition
    of the set of possible degrees $\{0,1,\ldots,n-1\}$
    into $i+1$ intervals). Let the size of the smallest group
    be $t+p_2$, and the size of the smallest of the remaining groups be $t+p_1$, where $p_1, p_2\geqslant 0$.
    If $p_1+p_2 \leq k-t$, then the number of pairs of vertices belonging to the same
    group is not less than
$$
f_0(n,k) - (-k^2/2+k(p_1+p_2)+kt+k/2-p_1^2/2-p_1t+p_1/2-p_2^2/2-p_2t+p_2/2-t^2/2-t/2+1).
$$
    If $p_1+p_2 \geq k-t$, then the number of pairs of vertices belonging to the same
    group is not less than
$$
f_0(n,k) - (-k^2/2+k(p_1+p_2)+kt+3k/2-p_1^2/2-p_1t-p_1/2-p_2^2/2-p_2t-p_2/2-t^2/2-3t/2+1).
$$
\end{lemma}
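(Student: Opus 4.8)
The plan is to split off the two smallest groups, whose sizes are prescribed, and to control the rest with Lemma~\ref{l35}. Write the group sizes in increasing order $x_0\le x_1\le\cdots\le x_i$; by hypothesis $x_0=t+p_2$ and $x_1=t+p_1$, so these two contribute exactly $\binom{t+p_2}{2}+\binom{t+p_1}{2}$ to the count $\sum_j\binom{x_j}{2}$ of same-group pairs. The remaining $i-1$ sizes $x_2,\dots,x_i$ sum to $S=n-(t+p_2)-(t+p_1)=k(i-1)+r$, where I set $r:=k-t-p_1-p_2$. The standing assumptions $n\ge 2k+1$, $t\le k$ force $i\ge 2$, so $m:=i-1\ge1$ and Lemma~\ref{l35} applies to $x_2,\dots,x_i$ with the convex function $\theta(x)=\binom{x}{2}$. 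Crucially, Lemma~\ref{l35} uses only the number and the sum of these values, so I never have to invoke the constraint that they are $\ge t+p_1$; the resulting inequalities bound the actual configuration directly.

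The sign of $r$ selects the instance of Lemma~\ref{l35}. When $p_1+p_2\le k-t$, i.e.\ $r\ge0$, I take $k'=k$, $B=r$, $A=i-1-r$ (so that $A+B=m$ and $Ak'+B(k'+1)=S$), obtaining $\sum_{j\ge2}\binom{x_j}{2}\ge (i-1)\binom{k}{2}+rk$ after using $\binom{k+1}{2}-\binom{k}{2}=k$. When $p_1+p_2\ge k-t$, i.e.\ $r\le0$, I instead take $k'=k-1$, $A=-r$, $B=i-1+r$, obtaining $\sum_{j\ge2}\binom{x_j}{2}\ge (i-1)\binom{k}{2}+r(k-1)$ via $\binom{k}{2}-\binom{k-1}{2}=k-1$. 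In the first regime $A$ may be negative and in the second $B$ may be negative; either way this is exactly the situation handled by the second case of Lemma~\ref{l35}, so both bounds are legitimate. Adding back $\binom{t+p_2}{2}+\binom{t+p_1}{2}$ yields, in the two regimes, lower bounds for $\sum_j\binom{x_j}{2}$ that agree except in the last term, $rk$ versus $r(k-1)$.

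It remains to compare these bounds with $f_0(n,k)=(i-1)\binom{k}{2}+\binom{k+1}{2}+\binom{t-1}{2}$. The term $(i-1)\binom{k}{2}$ cancels, so the defect $f_0-(\text{bound})$ is independent of $i$ and equals $\binom{k+1}{2}+\binom{t-1}{2}-\binom{t+p_2}{2}-\binom{t+p_1}{2}-r(k-\varepsilon)$ with $\varepsilon=0$ in the first regime and $\varepsilon=1$ in the second. Expanding each binomial coefficient and substituting $r=k-t-p_1-p_2$ is a routine computation that reproduces the two displayed expressions verbatim; the two regimes differ precisely by the additive term $r=k-t-p_1-p_2$, matching the difference between the two stated right-hand sides. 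The only place demanding care is this final algebraic identity together with the sign bookkeeping in the two applications of Lemma~\ref{l35}; there is no genuine combinatorial obstacle, since pinning down the two smallest groups reduces everything to the convexity bound already established.
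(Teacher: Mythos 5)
Your proof is correct and takes essentially the same route as the paper's: both split off the two smallest groups, apply Lemma~\ref{l35} with $\theta(x)=\binom{x}{2}$ to the remaining $i-1$ group sizes (with the sign of $r=k-t-p_1-p_2$ deciding whether the reference values are $k,k+1$ or $k-1,k$, and the signed-$A,B$ case of that lemma covering any negativity), and then add back $\binom{t+p_1}{2}+\binom{t+p_2}{2}$ and expand. The only difference is cosmetic: you simplify the two convexity bounds to $(i-1)\binom{k}{2}+rk$ and $(i-1)\binom{k}{2}+r(k-1)$ before the final algebra, whereas the paper leaves them written as combinations of $\binom{k}{2}$, $\binom{k+1}{2}$, $\binom{k-1}{2}$.
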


\begin{proof} We apply
Lemma  \ref{l35} to the function ${x\choose 2}$, where the numbers
$x_j$, $j=1,\ldots,i-1$ are the sizes of all groups except the two
smallest.
If $p_1+p_2\le k-t$, we obtain that
$$
\sum {x_j\choose 2}\geqslant (i-1-k+t+p_1+p_2){k\choose 2}+(k-t-p_1-p_2){k+1\choose 2}.
$$
 If $p_1+p_2 \geq k-t$, then
$$
\sum {x_j\choose 2}\geqslant (i-1+k-t-p_1-p_2){k\choose 2}+(-k+t+p_1+p_2){k-1\choose 2}.
$$

To complete the proof of the lemma, it suffices to add
${t+p_1\choose 2}+{t+p_2\choose 2}$ to
these expressions and
expand the brackets.
\end{proof}

Pairs of close vertices from one group are not enough for us. The following lemma guarantees the existence of a sufficiently large
number of pairs of close vertices from different groups.

\begin{lemma}\label{l4}
Partition the numbers from 0 to $n-1$ into $i+1$ intervals
in the following way: the first $i$ intervals contain $k$ numbers each,
the last one --- $t$ numbers. Accordingly,
the set of vertices of the graph $G$ is partitioned into $i+1$ groups.
Let $t+p_1$ and $t+p_2$, $p_1 \geq p_2 \geqslant 0$ be the minimal sizes of the groups.
Then in such a $G$ there are at least $\frac{(t+p_1)(t+p_2)(p_2+1)}{2k-p_2-3}$ close pairs of vertices from different groups. %and $t$ close pairs of vertices from different groups if $p = 0$.
\end{lemma}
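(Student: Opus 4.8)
The plan is to count close pairs between consecutive (adjacent-index) groups, since pairs from groups whose indices differ by more than one are likely to have degree gaps exceeding $k$, while neighboring groups overlap in a degree window of width less than $k$. Let me set up the structure: for each $j$, let $z_j$ denote the number of vertices in group $j$ and write $m_j$ for the average degree (or rather the degree-window position) of group $j$. The key quantitative input will be a refinement of Lemma \ref{l3}: when every group has size at least $t+p_2$, the spread of average degrees between extreme groups is controlled, and by telescoping this control across the $i$ gaps one obtains that \emph{many} of the consecutive group-pairs must have their degree ranges overlapping within a band of width $<k$, forcing close pairs between them.

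First I would make precise the notion that close cross-group pairs concentrate between consecutive groups. Fix a group $j$ with $t+p_2$ the global minimum size and $t+p_1$ the next-smallest; by Lemma \ref{l2} every group has at least $t$ vertices, and by hypothesis the two smallest have sizes $t+p_2$ and $t+p_1$. The degrees in the $j$-th interval lie in $[kj,\,k(j+1))$ for $j<i$. Two vertices in groups $j$ and $j+1$ are close precisely when their degrees differ by less than $k$; the degree pairs achieving this live in an overlapping window straddling the boundary $k(j+1)$. The count of such close pairs is therefore at least the product of the number of vertices in group $j$ with degree in the top part of its interval and the number in group $j+1$ with degree in the bottom part. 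I would lower-bound this product using a pigeonhole/averaging argument driven by the fact that the combined degree range of two consecutive groups has width $<2k$, so that a constant fraction (governed by $p_2$) of the $(t+p_1)(t+p_2)$ potential cross-pairs must actually be close.

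The precise denominator $2k-p_2-3$ strongly suggests the following mechanism: within two consecutive groups the total degree window has at most $2k-1$ integer values, and after accounting for the $t+p_2$ and $t+p_1$ vertices one shows that the extremal degrees cannot spread across the full window without violating the size or the handshake constraints used in Lemma \ref{l3}. Concretely, I would argue that if fewer than $\frac{(t+p_1)(t+p_2)(p_2+1)}{2k-p_2-3}$ cross-pairs were close, then one could separate the two groups' degrees into two blocks separated by a gap of size at least $k$ inside a window of width at most $2k-p_2-3$, leaving room for at most $2k-p_2-3-k = k-p_2-3$ ``slack'' degree values — and counting vertices against available degree values yields a contradiction with the minimum group sizes $t+p_2$, $t+p_1$. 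The factor $(p_2+1)$ arises because each of the $p_2+1$ ``excess'' vertices beyond size $t-1$ in the smaller group forces an additional overlapping degree, each contributing roughly $(t+p_1)$ fresh close pairs.

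The main obstacle I anticipate is the bookkeeping that converts the degree-window overlap into a clean product lower bound: one must simultaneously track (i) how vertices of each group distribute across the integer degrees of its interval, (ii) the overlap region shared with the neighboring interval, and (iii) the handshake/non-adjacency accounting of Lemma \ref{l3} that prevents the degrees from spreading out maximally. The delicate point is that a naive pigeonhole gives a bound with denominator $2k$ or $2k-1$; extracting the sharper $2k-p_2-3$ requires exploiting that the two smallest groups have the specified sizes and that the zeroth and $i$-th group averages are pinned by Lemma \ref{l3}, so the ``wasted'' degree values at the extremes can be charged off. I would therefore isolate a single pair of consecutive groups, prove the overlap bound there by a direct extremal argument on integer degrees, and only afterward worry about whether summing over all $i$ consecutive pairs (with care not to double-count) reproduces the stated single clean expression — most likely the statement is really about one optimally-chosen consecutive pair rather than a sum, which simplifies the final assembly considerably.
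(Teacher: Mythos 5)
Your opening move does match the paper's: split each group $j$ into a top part ($b_j$ vertices above a threshold) and a bottom part ($a_j$ vertices at or below it), choose the thresholds of consecutive groups to differ by exactly $k$, observe that every pair (top vertex of group $j$, bottom vertex of group $j+1$) is close, and so the cross-group close pairs number at least $\sum_j b_j a_{j+1}$. But your plan for finishing has a fatal flaw: you propose to ``isolate a single pair of consecutive groups'' and prove the bound there by a local extremal argument. No such local bound exists. For a fixed pair of middle groups $j$, $j+1$, nothing in the hypotheses (which only constrain group \emph{sizes}) prevents all vertices of group $j$ from having degrees at the very bottom of its interval and all vertices of group $j+1$ at the very top of theirs, in which case the degree gaps are at least $k$ and $b_j a_{j+1}=0$: that particular pair of groups contributes \emph{no} close pairs whatsoever. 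The content of the lemma is that this degeneracy cannot happen in all consecutive pairs simultaneously, and the paper needs a dedicated chain inequality (Lemma \ref{l5}) precisely for this: $\sum_j b_j a_{j+1}\geq \min\{b_0(a_1+b_1),\, (a_1+b_1)(a_2+b_2),\, \ldots,\, (a_{i-1}+b_{i-1})a_i\}$, which reduces everything to controlling only the splits $b_0$ and $a_i$ in the two \emph{extreme} groups, where Lemma \ref{l3} gives leverage. Your proposal contains no substitute for this chaining step, and without it the approach collapses.

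Second, your guessed mechanism for the denominator $2k-p_2-3$ (a pigeonhole against a degree window of width $2k-p_2-3$ containing a gap of size $k$) does not correspond to anything provable: two consecutive intervals genuinely span $2k$ degree values and no constraint shrinks that window. In the paper the denominator arises from an averaging argument in the extreme groups: Lemma \ref{l3} gives $y\leq ki+x-p_2-1$ for the averages $x$ of group $0$ and $y$ of group $i$, and since all degrees in group $i$ are at least $ki$ one gets $x\geq p_2+1$; writing $x$ as a mixture of top vertices (degree at most $k-1$) and bottom vertices (degree at most $\frac{x+y-ki}{2}\leq x-\frac{p_2+1}{2}$) forces the top proportion of group $0$ to be at least $\frac{p_2+1}{2k-p_2-3}$, and symmetrically for the bottom proportion of group $i$. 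This is also the true source of the factor $p_2+1$, not your heuristic of ``excess vertices each contributing $t+p_1$ fresh pairs.'' So the proposal finds the correct first step but is missing the two ideas — the chain inequality over all consecutive pairs and the extreme-group averaging — that constitute the actual proof, and its proposed replacement for them would fail.
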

\begin{proof}
Let $x$ be the average degree of vertices in the zeroth group. Let $y\leq ki+x-p_2-1$ be the average degree of vertices in the $i$-th group (we used Lemma \ref{l3}).
Since in the $i$-th group all degrees are at least $n-t=ki$,
we get that $x\geq p_2+1$.

Let in the $j$-th group,
$j=0,1,\ldots,i$, there be exactly $b_j$ vertices
whose degree is greater than
$\frac{x+y-ki}{2}+kj$, and $a_j$ vertices
whose degree is less than or equal to
$\frac{x+y-ki}{2}+kj$. Note that
the sought number of close pairs of vertices from
different groups is not less than $b_0a_1+b_1a_2+\ldots+b_{i-1}a_i\ge \min\{b_0(a_1+b_1), (a_1+b_1)(a_2+b_2), (a_2+b_2)(a_3+b_3), \dots , (a_{i-2}+b_{i-2})(a_{i-1}+b_{i-1}), (a_{i-1}+b_{i-1})a_i\}$ by Lemma
\ref{l5}.

Let us estimate $b_0$ from below.
Let the proportion of vertices with degree greater than
$\frac{x+y-ki}{2}$
in the zeroth group be $r$, set
$q=1-r$. Then the average degree
$x$ of the zeroth group is not greater than
\begin{align*}
x&\leq r\cdot (k-1)+q\cdot \left(\frac{x+y-ki}{2}
\right)\leqslant  r(k-1)+q\left(x-\frac{p_2+1}{2}\right)\\
&=x-\frac{p_2+1}2+r\left({k + \frac{p_2+1}{2} - x - 1}\right)\\
&\leq  x-\frac{p_2+1}2+r\left({k -\frac{p_2+1}{2}- 1}\right),
\end{align*}
whence $2k-p_2-3>0$ and $1\geqslant r\geqslant \frac{p_2+1}{2k-p_2-3}$ (the left inequality follows from the definition of $r$).

Now let us estimate $a_i$ from below.
Let the proportion of vertices with degree
$\leqslant \frac{x+y+ki}{2}$
in the $i$-th group be $r'$, set $q'=1-r'$.
Then the average degree $y$ in the $i$-th group is not less than
\begin{align*}
   y&\geq  r'\cdot ki+q'\frac{x+y+ki}{2}
   \geq r'\cdot ki+q'\left(y+\frac{p_2+1}{2}\right)
    =r'\left(ki - \frac{p_2+1}{2} - y\right) + y + \frac{p_2+1}{2},
\end{align*}
whence
$$
\frac{p_2+1}{2}\leq r'\left(\frac{p_2+1}{2}+y-ki\right)\leq
r'\left(x -\frac{p_2+1}{2}\right)
\leq r'\left({k -\frac{p_2+1}{2}- 1}\right),
$$
that is, $2k-p_2-3>0$ and $1\geqslant r'\geqslant \frac{p_2+1}{2k-p_2-3}$ (the left inequality follows from the definition of $r'$).

Then $\min\{b_0(a_1+b_1), (a_1+b_1)(a_2+b_2), (a_2+b_2)(a_3+b_3), \dots , (a_{i-2}+b_{i-2})(a_{i-1}+b_{i-1}), (a_{i-1}+b_{i-1})a_i\} \geqslant \min\{\frac{p_2+1}{2k-p_2-3}(a_0+b_0)(a_1+b_1), (a_1+b_1)(a_2+b_2), (a_2+b_2)(a_3+b_3), \dots , (a_{i-2}+b_{i-2})(a_{i-1}+b_{i-1}), (a_{i-1}+b_{i-1})(a_i+b_i)\frac{p_2+1}{2k-p_2-3}\}$

We know that $a_j+b_j\geqslant t+p_2$
for all $j=1,2,\ldots,i-1$ and $a_j+b_j\geqslant t+p_1$
for all $j=1,2,\ldots,i-1$, except one. Hence, $\min\{\frac{p_2+1}{2k-p_2-3}(a_0+b_0)(a_1+b_1), (a_1+b_1)(a_2+b_2), (a_2+b_2)(a_3+b_3), \dots , (a_{i-2}+b_{i-2})(a_{i-1}+b_{i-1}), (a_{i-1}+b_{i-1})(a_i+b_i)\frac{p_2+1}{2k-p_2-3}\} \geqslant \frac{(t+p_1)(t+p_2)(p_2+1)}{2k-p_2-3}$.
\end{proof}

\begin{remark*}
    $i(t+p_1)+t+p_2 \leq ki+t \Rightarrow{} p_1 \leq k-t$
\end{remark*}

\begin{proof}[Proof of Theorem 5.]
Suppose $t \geq 2k/3$. Partition the numbers from 0 to $n-1$
and, accordingly, the vertices of the graph as in the previous lemma.
Denote by $t+p_2$ the size of the smallest group, and by $t+p_1$ the size of the smallest of the remaining groups. Lemma \ref{l2} allows us to assume that $p_2\geq 0$.

Let $p_1+p_2 \geq k-t$.
By Lemma \ref{l36}, it suffices to prove that the number of close pairs of vertices
of the graph $G$ lying in different groups is strictly greater
than $-k^2/2+k(p_1+p_2)+kt+3k/2-p_1^2/2-p_1t-p_1/2-p_2^2/2-p_2t-p_2/2-t^2/2-3t/2$. This number, by Lemma
\ref{l4}, is at least
$\frac{(t+p_1)(t+p_2)(p_2+1)}{2k-p_2-3}$, so the theorem reduces to the inequality:
\begin{align*}
g(k,t,p_1,p_2)=2k^3-4k^2t-4k^2p_1-5k^2p_2-9k^2+2kt^2+4ktp_1+6ktp_2+12kt+2kp_1^2+\\
+2kp_1p_2+8kp_1+4kp_2^2+11kp_2+9k+t^2p_2-t^2-4tp_1-7tp_2-9t-p_1^2p_2-3p_1^2+\\
+2p_1p_2^2+p_1p_2-3p_1-p_2^3-4p_2^2-3p_2 > 0
\end{align*}

We have \begin{align*}
\frac{\partial g}{\partial k}=6k^2-8kp_1-10kp_2-8kt-18k+2t^2+4tp_1+\\
+6tp_2+12t+2p_1^2+2p_1p_2+8p_1+4p_2^2+11p_2+9
\end{align*}
Let us check that this is less than 0. This is
a convex quadratic trinomial in
$k$, so it suffices to check at the extreme points $t+p_1, t+p_1+p_2$. We have
\begin{align*}
\frac{\partial g}{\partial k}(t+p_1+p_2, t, p_1, p_2) = -4p_1p_2-10p_1-7p_2-6t+9,
\end{align*} this is
negative for $t \geq 2$. Next,
\begin{align*}
\frac{\partial g}{\partial k}(t+p_1, t, p_1, p_2) = -8p_1p_2-10p_1+4p_2^2-4p_2t+11p_2-6t+9 \leq -3p_1p_2-4p_2t-6t+9,
\end{align*}
which is negative for $t \geq 2$. If $t=1$, then $k \leq 3/2$, and this case has already been dealt with. Hence, it suffices to check
\begin{align*}
\frac{1}{2}g(t+p_1+p_2,t,p_1,p_2) = 3p_1-2p_1^2+3p_2+p_1p_2-2p_1^2p_2-p_2^2-p_1t-p_2t-p_1p_2t+\\
+p_2^2t+t^2+p_2t^2 > 0.
\end{align*}
Note that if $p_1 = 0$, then $p_2=0$ and $\frac{1}{2}g(t+p_1+p_2,t,p_1,p_2) = t^2 > 0$.\\
Let $t = x(p_1+p_2)$. Then $\frac{2}{3} \leq \frac{t}{k} = \frac{x}{x+1} \Rightarrow{} 2 \leq x$
\begin{align*}
\frac{1}{2}g(t+p_1+p_2,t,p_1,p_2) = (p_1^2p_2+p_1^2+p_2^2)(x^2-x-2)+\\
+p_2^3(x^2+x)+2p_1p_2^2x^2+p_1p_2(2x^2-2x+1)+3p_1+3p_2,
\end{align*}
all terms are nonnegative for $x \geq 2$, and some are positive for $p_1 > 0$.

Let us proceed to the second case. Let $p_1+p_2 \leq k-t$.
By Lemma \ref{l36}, it suffices to prove that the number of close pairs of vertices
of the graph $G$ lying in different groups is strictly greater
than $-k^2/2+k(p_1+p_2)+kt+k/2-p_1^2/2-p_1t+p_1/2-p_2^2/2-p_2t+p_2/2-t^2/2-t/2$. This number, by Lemma
\ref{l4}, is at least
$\frac{(t+p_1)(t+p_2)(p_2+1)}{2k-p_2-3}$, so the theorem reduces to the inequality:
\begin{align*}
2k^3-4k^2t-4k^2p_1-5k^2p_2-5k^2+2kt^2+4ktp_1+6ktp_2+8kt+2kp_1^2+2kp_1p_2+\\
+4kp_1+4kp_2^2+5kp_2+3k+t^2p_2-t^2-4tp_1-5tp_2-3t-p_1^2p_2-3p_1^2+\\
+2p_1p_2^2+3p_1p_2+3p_1-p_2^3-2p_2^2+3p_2 = \\
= 2k(k-(t+p_1+p_2))^2+4t(k-(t+p_1+p_2))+p_2^2(p_1-p_2)+ \\
+3(k-t)+2p_2(p_1-p_2)+p_1p_2+3p_1+3p_2+kp_2(t-2p_1)+ \\
+ktp_2+2kp_2^2+4kt+3t^2+p_1p_2^2+4kp_1+5kp_2+t^2p_2-k^2p_2-tp_2-5k^2-p_1^2p_2-3p_1^2> 0
\end{align*}
Let us simplify:
\begin{align*}
p_1 \geq p_2 \geq 0\\
t \geq 2k/3 \Rightarrow{} t^2p_2+ \frac{5}{6}ktp_2 \geq k^2p_2 \\
p_1 \leq k-t \leq t/2, p_1 \leq k-t \leq k/3 \Rightarrow{} \frac{1}{6}ktp_2 \geq p_1^2p_2 \\
t \geq 2k/3 \Rightarrow{} \frac{9}{4}t^2 \geq k^2, \\
kp_2(t-2p_1) \geq 0 \\
p_1 \leq  t/2 \Rightarrow{} \frac{3}{4}t^2 \geq 3p_1^2 \\
kp_2 \geq tp_2
\end{align*}
Applying these 7 inequalities, we reduce the large inequality to
\begin{align*}
2k(k-(t+p_1+p_2))^2+4t(k-(t+p_1+p_2))+3(k-t)+4kt+4kp_1+4kp_2-4k^2 = \\
2k(k-(t+p_1+p_2))^2+4t(k-(t+p_1+p_2))+3(k-t)+4k((t+p_1+p_2)-k) = \\
(k-(t+p_1+p_2))(2k(k-(t+p_1+p_2))-2k)+(k-(t+p_1+p_2))(4t-2k)+3(k-t)> 0
\end{align*}
If $k-(t+p_1+p_2) \geq 1$, all terms except the last are clearly nonnegative. If $k-(t+p_1+p_2) = 0$, they vanish. Therefore, the inequality reduces to $k-t>0$.
\end{proof}

%%==================================================================

%%==================================================================
\end{document}